\documentclass{amsart}
\usepackage[
    top=1in,
    bottom=1in,
    left=1in,
    right=1in
]{geometry}
\usepackage[all]{xy}
\usepackage{verbatim}
\usepackage{color}
\usepackage{amsthm}
\usepackage{amssymb}
\usepackage[colorlinks=true]{hyperref}
\usepackage{footmisc}
\usepackage{mathtools}
\usepackage{tabularx}

\numberwithin{equation}{section}

\newtheorem{theorem}[equation]{Theorem}
\newtheorem*{theorem*}{Theorem}

\newtheorem*{conjecture*}{Mamma Conjecture}
\newtheorem*{conjecture1*}{Mamma Conjecture (revisited)}
\newtheorem{proposition}[equation]{Proposition}
\newtheorem{corollary}[equation]{Corollary}
\newtheorem*{corollary*}{Corollary}

\theoremstyle{remark}
\newtheorem{definition}[equation]{Definition}

\newtheorem{example}[equation]{Example}

\theoremstyle{remark}
\newtheorem{remark}[equation]{Remark}

\newcommand{\cA}{{\mathcal A}}
\newcommand{\cB}{{\mathcal B}}

\newcommand{\bbZ}{\mathbb{Z}}

\DeclareMathOperator{\NChow}{NChow} 

\newcommand{\dgcat}{\mathrm{dgcat}} 

\newcommand{\perf}{\mathrm{perf}}

\newcommand{\dg}{\mathrm{dg}}

\newcommand{\Hom}{\mathrm{Hom}}

\newcommand{\op}{\mathrm{op}}

\newcommand{\too}{\longrightarrow}

\let\oldmarginpar\marginpar
\def\marginpar#1{\oldmarginpar{\tiny #1}}

\begin{document}

\title[Independence of the Grothendieck classes of twisted symplectic flag varieties]{Independence of the Grothendieck classes of \\twisted symplectic flag varieties}
\author{Gon{\c c}alo~Tabuada}
\address{Gon{\c c}alo Tabuada, Mathematics Institute, Zeeman Building, University of Warwick, Coventry CV4 7AL UK.}
\email{goncalo.tabuada@warwick.ac.uk}
\date{\today}

\abstract{In this short note, making use of the theory of noncommutative motives, we prove that the Grothendieck classes of the twisted symplectic flag varieties are linearly independent and moreover homogeneous-quadratic independent in the Grothendieck ring of algebraic varieties.}
}

\maketitle

\section{Introduction}\label{sec:intro}
Let $k$ be a field and $\mathrm{Var}(k)$ the category of {\em algebraic varieties}, i.e., reduced separated $k$-schemes of finite type. The {\em Grothendieck ring of algebraic varieties $K_0\mathrm{Var}(k)$}, introduced in a letter from Grothendieck to Serre (consult \cite[letter of 16/08/1964]{SG}), is defined as the quotient of the free abelian group on the set of isomorphism classes of algebraic varieties $[X]$ by the ``scissor'' relations $[X]=[Z]+[X\backslash Z]$, where $Z$ is a closed subvariety of $X$; the multiplication law is induced by the product. Despite the efforts of several mathematicians (consult, for example, the articles \cite{Bittner,Borisov, Kollar, Lunts, Sebag, Naumann}), the structure of the ring $K_0\mathrm{Var}(k)$ still remains nowadays poorly understood. In this short note, we improve our understanding of the Grothendieck ring of algebraic varieties by studying the Grothendieck classes of twisted symplectic flag varieties.
\subsection*{Twisted symplectic flag varieties}
Let $k$ be a field of characteristic $\neq 2$. Given a central simple $k$-algebra $A$, let us write $\mathrm{deg}(A):=\sqrt{\mathrm{dim}_k(A)}$ for its degree, $\mathrm{per}(A)$ for its period and $[A] \in \mathrm{Br}(k)$ for its Brauer class.

Given a central simple $k$-algebra with involution of symplectic type\footnote{If a central simple $k$-algebra $A$ admits an involution of symplectic type, then $\mathrm{deg}(A)$ is even and $\mathrm{per}(A)\leq 2$.} $(A,\ast)$ and a sequence of integers $1 \leq n_1 < \ldots < n_r \leq \mathrm{deg}(A)/2$, let us write $\mathrm{Flag}(A,\ast; n_1, \ldots, n_r)$ for the associated twisted symplectic flag variety; consult \cite[\S5]{Merkurjev}. Recall from {\em loc. cit.} that this algebraic variety has dimension 
$$\mathrm{deg}(A)^2/4 - \sum_{j=1}^r \binom{n_j - n_{j-1}}{2} - (\mathrm{deg}(A)/2 - n_r)^2$$
and that its $R$-points, with $R$ a commutative $k$-algebra, are given by
$$ \mathrm{Flag}(A,\ast; n_1, \ldots, n_r)(R)= \{(I_1, \ldots, I_r)\,|\, I_1 \subseteq  \ldots \subseteq I_k \subseteq A \otimes_kR\}\,,$$
where each $I_j$ is a right ideal of $A\otimes_k R$ and a direct summand $R$-submodule of $A\otimes_kR$ of rank $n_j \mathrm{deg}(A)$ such that $(\ast \otimes_k R)(I_j)\cdot I_j =\{0\}$. 
\begin{example}[Twisted symplectic Grassmannians]\label{ex:1}
In the particular case where $r=1$, a twisted symplectic flag variety reduces to a twisted symplectic Grassmannian $\mathrm{Gr}(A,\ast, n)$, with $1\leq n \leq \mathrm{deg}(A)/2$. Moreover, in the particular case where $n=1$, resp. $n=\mathrm{deg}(A)/2$, a twisted symplectic Grassmannian reduces furthermore to a Severi-Brauer variety $\mathrm{SB}(A)$, resp. to a twisted Lagrangian variety $\mathrm{LGr}(A,\ast)$.
\end{example}
\begin{example}[Conics]\label{ex:conics}
Given a quaternion $k$-algebra $Q=(a,b)$, recall that its canonical involution $q \mapsto \overline{q}$ is of symplectic type and that the associated Severi-Brauer variety $\mathrm{SB}(Q)$ is the following conic:
$$C(a,b):=(-ax^2 - by^2 + abu^2=0) \subset \mathbb{P}^2\,.$$ 
Therefore, since the assignment $(a,b) \mapsto C(a,b)$ induces a one-to-one correspondence between quaternion $k$-algebras up to isomorphism and conics up to isomorphism, we hence conclude from the above Example \ref{ex:1} that every conic is a twisted symplectic flag variety. 
\end{example}
\begin{example}[Symplectic flag varieties]\label{ex:split}
In the particular case where the central simple $k$-algebra $A$ is {\em $k$-split}, i.e., when $A=\mathrm{End}_k(V)$ with $V$ a $k$-vector space of even dimension $2d$, there is a unique involution of symplectic type $\ast$ on $A$ up to isomorphism. Specifically, it is the adjoint involution $\ast_{\omega}$ of the unique non-degenerate symplectic form $\omega\colon V \times V \to k$ on $V$ up to similarity. In this particular case, a twisted symplectic flag variety reduces to a classical symplectic flag variety $\mathrm{Flag}(2d; n_1, \ldots, n_r)$ with $1\leq n_1 < \ldots < n_r \leq d$. Recall that the $k$-rational points of the classical symplectic flag variety are given by 
$$ \mathrm{Flag}(2d; n_1, \ldots, n_r)(k) =\{(V_1, \ldots, V_r) \,|\, V_1 \subseteq \ldots \subseteq V_r \subseteq V\}\,,$$
where each $V_j$ is a $k$-subspace of $V$ of dimension $n_j$ such that $\omega(V_j,V_j)=\{0\}$. 
\end{example}
\begin{remark}[Forms]\label{rk:forms}
Following Example \ref{ex:split}, every twisted symplectic flag variety $\mathrm{Flag}(A,\ast; n_1, \ldots, n_r)$ becomes isomorphic to the classical symplectic flag variety after extension of scalars to a splitting field of $A$. Conversely, as explained in \cite[\S5]{Merkurjev}, those algebraic varieties that become isomorphic to the classical symplectic flag variety after an extension of scalars are precisely the twisted symplectic flag varieties. Hence, the twisted symplectic flag varieties are the $k$-forms\footnote{Recall that an algebraic variety $X$, defined over $k$, is a {\em $k$-form} of an algebraic variety $Y$, defined over $\overline{k}$, if $X\times_k \overline{k} \simeq Y$.} of the classical symplectic flag varieties.
\end{remark}
\begin{example}[Quaternion algebras]\label{ex:quaternions}
In the particular case where $A=\mathrm{End}_Q(W)$, with $Q$ a quaternion division $k$-algebra and $W$ a right $Q$-vector space, there is a one-to-one correspondence $h \mapsto \ast_h$ between non-degenerate Hermitian forms\footnote{Hermitian forms $h\colon W \times W \to Q$ with respect to the canonical involution $q \mapsto \overline{q}$ of $Q$.} $h\colon W \times W \to Q$ on $W$ up to similarity and involutions of symplectic type $\ast_h$ on $A$ up to isomorphism. Hence, in this particular case, the $k$-rational points may be described as follows 
$$ \mathrm{Flag}(\mathrm{End}_Q(W), \ast_h; n_1, \ldots, n_r)(k) =\{(W_1, \ldots, W_r) \,|\, W_1 \subseteq \ldots \subseteq W_r \subseteq W\}\,,$$
where each $W_j$ is a right $Q$-subspace of $W$ of dimension $n_j/2$ such that $h(W_j, W_j)=\{0\}$.
\end{example}
\subsection*{Statement of results}
Let $k$ be a field of characteristic zero.
\begin{definition}
Let $R$ be a commutative ring. A family $\{a_i\}_{i \in I}$ of elements of $R$ is called {\em homogeneous-quadratic independent} if for every finite subset of indices $\{i_1, \ldots, i_m\} \subseteq I$, the only homogeneous polynomial $f \in \mathbb{Z}[x_1, \ldots, x_m]$ of degree $2$ that satisfies $f(a_{i_1}, \ldots, a_{i_m})=0$ is the zero polynomial.
\end{definition}
\begin{theorem}\label{thm:main}
Let $\{(A_i,\ast_i)\}_{i \in I}$ be a family of central simple $k$-algebras with involution of symplectic type and $\{1\leq n^i_1 < \ldots < n^i_{r_i}\leq \mathrm{deg}(A_i)/2\}_{i \in I}$ a family of sequences of integers. Consider the associated family of Grothendieck classes of twisted symplectic flag varieties in the Grothendieck ring of algebraic varieties:
\begin{equation}\label{eq:Grothendieck}
\{[\mathrm{Flag}(A_i, \ast_i; n^i_1, \ldots, n^i_{r_i})]\}_{i \in I}\,.
\end{equation}
\begin{itemize}
\item[a)] If the Brauer classes $[A_i]$, with $i \in I$, are pairwise distinct, then the family \eqref{eq:Grothendieck} is linearly independent.
\item[b)] If the Brauer classes $[A_i], [A_i \otimes A_j]$, with $i, j \in I$ and $i\neq j$, are pairwise distinct, then the family \eqref{eq:Grothendieck} is homogeneous-quadratic independent.
\end{itemize}
\end{theorem}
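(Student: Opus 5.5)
We will use the motivic measure from $K_0\mathrm{Var}(k)$ to noncommutative Chow motives. In characteristic zero, Bittner's presentation of $K_0\mathrm{Var}(k)$ by smooth projective varieties and blow-up relations gives a ring homomorphism
$$\mu\colon K_0\mathrm{Var}(k) \too K_0(\NChow(k)), \qquad [X] \mapsto [\perf_\dg(X)].$$
Here $K_0(\NChow(k))$ is the Grothendieck ring of the additive symmetric monoidal category of noncommutative Chow motives. The relation $\mu([X])=\mu([\mathrm{Bl}_Z X]) - \mu([E]) + \mu([Z])$ comes from Orlov's semi-orthogonal decompositions for blow-ups and projective bundles, and additivity of $K_0$ on semi-orthogonal decompositions. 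It therefore suffices to prove parts a) and b) for the images $\mu([\mathrm{Flag}(A_i,\ast_i;n^i_\bullet)])$.

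\textbf{Step 1: compute the images.} Using the Panin/Merkurjev computation of the motive of twisted flag varieties (or a Kapranov-type exceptional collection twisted by Galois descent), $\perf_\dg(\mathrm{Flag}(A,\ast;n_\bullet))$ admits a semi-orthogonal decomposition whose pieces are $\perf_\dg(A^{\otimes m})$. Since $\mathrm{per}(A)\le 2$, each $A^{\otimes m}$ is Morita equivalent to $k$ or to $A$. This gives
$$\mu([\mathrm{Flag}(A,\ast;n_\bullet)]) = a\,[k] + b\,[A], \qquad a,b\ge 0,\ a+b = \chi,$$
where $\chi>0$ is the Euler characteristic, equal to the number of Schubert cells of the split form. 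We need $b>0$ when $A$ is non-split. This follows because $a$ and $b$ count the Schubert cells of even and odd "weight", and both kinds occur: for $\mathrm{SB}(A)$ the pieces are $A^{\otimes 0},\dots,A^{\otimes(\deg A-1)}$.

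\textbf{Step 2: structure of the target.} We use that the noncommutative Chow motives of central simple algebras satisfy $\Hom(A,B)=K_0(A^\op\otimes B)$. As a consequence, $[A]\cong[B]$ in $\NChow(k)$ if and only if $A$ and $B$ are Brauer equivalent, and the full subcategory they span has $K_0$ with a basis indexed by Brauer classes. The key input is a theorem (due to the author with Van den Bergh) that the subgroup of $K_0(\NChow(k))$ generated by the classes $[A]$ of central simple algebras is free abelian with basis the distinct Brauer classes. Products are given by $[A][B]=[A\otimes B]$.

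\textbf{Step 3: linear independence (part a).} Write $x_i=\mu([\mathrm{Flag}_i]) = a_i[k]+b_i[A_i]$. Suppose $\sum c_i x_i=0$. At most one $A_i$ is split. For every $i$ with $[A_i]\ne 0$, the coefficient of the basis element $[A_i]$ is $c_ib_i$, which forces $c_i=0$. The remaining equation is then $c_{i_0}\chi_{i_0}[k]=0$, which forces $c_{i_0}=0$.

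\textbf{Step 4: homogeneous-quadratic independence (part b).} For a degree-2 form $f=\sum_{i\le j} c_{ij}x_ix_j$, expand $x_ix_j = a_ia_j[k] + a_ib_j[A_j] + b_ia_j[A_i] + b_ib_j[A_i\otimes A_j]$ and $x_i^2=a_i^2[k]+2a_ib_i[A_i]+b_i^2[A_i\otimes A_i]$, where $[A_i\otimes A_i]=[k]$ since the period is at most 2. The hypothesis says the classes $[A_i\otimes A_j]$ for $i\ne j$ are distinct from each other and from all the $[A_l]$. Because no $A_i$ is split when $i\ne j$ is present, each such class is also non-trivial, and it appears only in the term $x_ix_j$ with coefficient $c_{ij}b_ib_j$; hence $c_{ij}=0$ for $i\neq j$. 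What remains is $\sum_i c_{ii}x_i^2 = \sum_i c_{ii}\big((a_i^2+b_i^2)[k] + 2a_ib_i[A_i]\big)$. Comparing coefficients of the $[A_i]$ kills every $c_{ii}$ with $A_i$ non-split. This requires $a_i>0$, which holds because the piece $A^{\otimes 0}=k$ always occurs. The possible split index is then handled through the $[k]$-coefficient, using $\chi^2>0$.

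The main obstacle is Step 1, namely establishing that both multiplicities $a$ and $b$ are positive. This rests on the explicit form of the semi-orthogonal decomposition (or motivic decomposition) of twisted symplectic flag varieties, which we expect to obtain via Panin's computation of $K$-theory of twisted homogeneous varieties.
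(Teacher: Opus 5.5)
Your proposal is correct and follows the paper's route: the measure $\mu_{\mathrm{nc}}$, the decomposition $U(\perf_\dg(\mathrm{Flag}(A,\ast;n_\bullet)))\simeq U(k)^{\oplus\alpha}\oplus U(A)^{\oplus\beta}$ with $\alpha,\beta>0$ taken from Panin's computation, the embedding $\bbZ[{}_2\mathrm{Br}(k)]\simeq K_0({}_2\mathrm{CSA}(k)^\oplus)\subset K_0(\NChow(k))$, and then comparison of coefficients. The paper proves that embedding in Proposition~\ref{prop:aux} from a cancellation result and idempotent completeness, rather than quoting it as a single theorem; your coefficient comparison is correct and handles the possibly split index in part b) more carefully than the paper does.

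Three small corrections. First, what is available in general, and all you actually use, is this direct-sum decomposition in $\NChow(k)$; a semi-orthogonal decomposition of $\perf_\dg(\mathrm{Flag}(A,\ast;n_\bullet))$ into pieces $\perf_\dg(A^{\otimes m})$ is not known for arbitrary symplectic flags. Second, the Severi--Brauer example alone does not show $\beta>0$ in general; as in the paper, this has to be read off from Panin's explicit computation. Third, $[A_i\otimes A_j]\neq[k]$ for $i\neq j$ holds because $[A_i]\neq[A_j]$ and $\mathrm{per}(A_i)\leq 2$, not because the $A_i$ are non-split.
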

\begin{remark}[Assumptions]
Item b) of Theorem \ref{thm:main} is false without the assumption that the Brauer classes $[A_i], [A_i \otimes A_j]$, with $i, j \in I$ and $i\neq j$, are pairwise distinct. For example, in the particular case of conics (see Example \ref{ex:conics}), given a quaternion division $k$-algebra $Q=(a,b)$, Koll\'ar\footnote{Koll\'ar worked over a number field or over the function field of an algebraic surface over $\mathbb{C}$. In subsequent work, Hogadi \cite{Hogadi} removed these restrictions on the base field $k$.} proved in \cite[Thm.~2]{Kollar} that the following homogeneous quadratic relation holds in the Grothendieck ring of algebraic varieties:
$$[C(a,b)] [\mathbb{P}^1] - [C(a,b)] [C(a,b)]=0$$ 
 Note that this does not contradicts item b) of Theorem \ref{thm:main} because in this particular case we have $C(1,1)=\mathbb{P}^1$ and the Brauer classes $[Q]$ and $[Q\otimes (1,1)]$ are the same.
\end{remark}
\begin{remark}[Number fields]\label{rk:numbers}
In the case where $k$ is a number field, it is well-known that there is a one-to-one correspondence $Q \mapsto \mathrm{Ram}(Q)$ between quaternion $k$-algebras up to isomorphism and finite subsets of even cardinality of the set of places of $k$ (a.k.a. ramification subsets). Moreover, every central simple $k$-algebra $A$ of even degree and $\mathrm{per}(A)\leq 2$ can be written uniquely as a matrix algebra over a quaternion algebra $M_{n \times n}(Q)$. This implies that $\mathrm{Ram}(A)=\mathrm{Ram}(Q)$. Consequently, the assumption of Theorem \ref{thm:main} a) can be re-written as follows: the sets $\mathrm{Ram}(A_i)$, with $i \in I$, are pairwise distinct. In the same vein, since $\mathrm{Ram}(A_i \otimes A_j) = \big(\mathrm{Ram}(A_i) \cup \mathrm{Ram}(A_j)\big) \backslash \big(\mathrm{Ram}(A_i) \cap \mathrm{Ram}(A_j)\big)$, the assumption of Theorem \ref{thm:main} b) can be re-written as: the sets $\mathrm{Ram}(A_i), \big(\mathrm{Ram}(A_i) \cup \mathrm{Ram}(A_j)\big) \backslash \big(\mathrm{Ram}(A_i) \cap \mathrm{Ram}(A_j)\big)$, with $i, j \in I$ and $i\neq j$, are pairwise distinct.
\end{remark}
\begin{example}[Infinite family over $\mathbb{Q}$]
In the particular case where $k=\mathbb{Q}$, recall that its set of places is given by $\{p\,|\,p\,\, \text{prime}\,\,\text{number}\} \cup \{\infty\}$. Consider the infinite family of quaternion $\mathbb{Q}$-algebras $(-1,-p)$, with $p \equiv 3\, (\text{mod}\, 4)$, where $p$ is a prime number. Since $\mathrm{Ram}(-1,-p)=\{p, \infty\}$, the sets $\mathrm{Ram}(-1,-p)=\{p,\infty\}$, with $p \equiv 3\, (\text{mod}\, 4)$, are pairwise distinct. Similarly, the sets 
\begin{eqnarray*}
\mathrm{Ram}(-1,-p) = \{p, \infty\} && \big(\mathrm{Ram}(-1,-p)\cup \mathrm{Ram}(-1,-q) \big)\backslash \big(\mathrm{Ram}(-1,-p) \cap \mathrm{Ram}(-1,-q)\big)=\{p,q\}\,,
\end{eqnarray*}
with $p, q  \equiv 3\, (\text{mod}\, 4)$ and $p \neq q$, are pairwise distinct. Therefore, given any family of central simple $\mathbb{Q}$-algebras with involution of symplectic type $\{(\mathrm{End}_{(-1,-p)}(W_p), \ast_{h_p})\}_{p \equiv 3\, (\text{mod}\, 4)}$ (consult Example \ref{ex:quaternions}) and any family of sequences of integers $\{1 \leq n_1^p < \cdots < n^p_{r_p}\leq \mathrm{dim}_{(-1,-p)}(W_p)\}_{p \equiv 3\, (\text{mod}\, 4)}$, it follows from Remark \ref{rk:numbers} that the associated infinity family of Grothendieck classes of twisted symplectic flag varieties 
$$ \{[\mathrm{Flag}(\mathrm{End}_{(-1,-p)}(W_p), \ast_{h_p}; n_1^p, \ldots, n^p_{r_p})]\}_{p \equiv 3\, (\text{mod}\, 4)}$$
is linearly independent and homogeneous-quadratic independent. 
\end{example}
Recall from \cite[\S II]{Lam} the definition of the Witt ring $W(k)$ of $k$ and of the fundamental ideal $I(k) \subset W(k)$. In what follows, we will denote by $I(k)^3$ the $3$rd power of $I(k)$.
\begin{corollary}[Forms]\label{cor:forms}
Assume that $I(k)^3=0$. Given a sequence of integers $1 \leq n_1 < \ldots < n_r \leq d$, the following family of Grothendieck classes in the Grothendieck group of algebraic varieties
\begin{equation}\label{eq:family}
\{[X]\,|\, X\,\,\text{is}\,\,\text{a}\,\,k\text{-}\text{form}\,\,\text{of}\,\,\mathrm{Flag}(2d;n_1, \ldots, n_r)\}
\end{equation}
is linearly independent.
\end{corollary}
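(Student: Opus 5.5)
We plan to deduce Corollary \ref{cor:forms} from Theorem \ref{thm:main} a). By Remark \ref{rk:forms}, every $k$-form $X$ of $\mathrm{Flag}(2d;n_1,\ldots,n_r)$ is isomorphic to a twisted symplectic flag variety $\mathrm{Flag}(A,\ast;n_1,\ldots,n_r)$, where $(A,\ast)$ is a central simple $k$-algebra of degree $2d$ with involution of symplectic type. The sequence $n_1<\cdots<n_r$ is fixed, so the isomorphism class of $X$, and hence its Grothendieck class, depends only on the isomorphism class of $(A,\ast)$. Since the family \eqref{eq:family} is a \emph{set} of classes, distinct forms with the same class give the same element, so no repetitions need to be handled. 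By Theorem \ref{thm:main} a), it then suffices to show the following: if two forms $X=\mathrm{Flag}(A,\ast;\underline{n})$ and $X'=\mathrm{Flag}(A',\ast';\underline{n})$ have distinct classes $[X]\neq[X']$, then $[A]\neq[A']$. Equivalently, $[A]=[A']$ must force $[X]=[X']$. We will in fact show the stronger statement that $[A]=[A']$ forces $(A,\ast)\simeq(A',\ast')$, which gives $X\simeq X'$.

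Here is the argument for that stronger statement. Since $\deg A=\deg A'=2d$ and $[A]=[A']$, Wedderburn's theorem gives $A\simeq A'$. So we may write $A=A'=\mathrm{End}_D(W)$, where $D$ is the division algebra underlying $A$. Because $\mathrm{per}(A)\le 2$, $D$ carries an involution of the first kind. Following Example \ref{ex:quaternions}, symplectic involutions on $A$ up to isomorphism then correspond to similarity classes of nondegenerate forms $h$ on $W$ of the appropriate type: hermitian over $(D,\bar{\ }\,)$ with $\bar{\ }$ symplectic, or skew-hermitian if one chooses an orthogonal involution on $D$. We handle the cases as follows.
\begin{itemize}
\item[(i)] If $D=k$, the symplectic form on $W$ is unique up to isometry (Example \ref{ex:split}).
\item[(ii)] If $D=Q$ is a quaternion algebra, hermitian forms over $(Q,\bar{\ }\,)$ are classified by the associated quadratic form $q_h$ of dimension $4\dim_Q W$, which lies in $I(k)^2$ (Jacobson's theorem). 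Two such hermitian forms of the same dimension have quadratic forms differing by an element of $I(k)^3$. Since $I(k)^3=0$ and forms are determined by their Witt class and dimension, $h$ is unique up to isometry.
\item[(iii)] If $\deg D\ge 4$: assuming $I(k)^3=0$ for a field of characteristic zero, we expect $D$ to be at most quaternion. Indeed, a biquaternion division algebra would have an Albert form which is anisotropic and lies in $I(k)^2$, and $I(k)^3=0$ forces the $u$-invariant to be small enough to rule it out. We will state this via Merkurjev's theorem: $\mathrm{Br}_2(k)\simeq I^2/I^3=I^2$, every $2$-torsion class is represented by a $2$-fold Pfister form, hence by a quaternion algebra, so case (iii) is excluded.
\end{itemize}

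Alternatively, we can avoid classifying all involutions. One can use the invariant of symplectic involutions given by the Witt class of the trace form or the discriminant-type invariant in $I^3$, which vanishes under $I(k)^3=0$. We expect the classification step (ii)/(iii) to be the main obstacle: showing that under $I(k)^3=0$ any two symplectic involutions on $A$ are conjugate, via the uniqueness of hermitian forms over quaternions. With this step established, distinct classes in \eqref{eq:family} come from algebras with distinct Brauer classes, and Theorem \ref{thm:main} a) concludes the proof.
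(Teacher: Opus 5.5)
Your reduction to Theorem \ref{thm:main} a) is the same as the paper's. Everything hinges on one fact: when $I(k)^3=0$, a central simple algebra $A$ of degree $2d$ and period $\leq 2$ has a unique symplectic involution up to isomorphism, so the Brauer class determines the form. The paper does not prove this; it cites \cite[Thm.~A]{LewisT}, which holds for every such $A$, whatever its index. Your cases (i) and (ii) are fine. In particular the Jacobson argument in (ii) is correct, since $q_h-q_{h'}=(\langle a_i\rangle-\langle b_i\rangle)\cdot n_Q\in I(k)^3=0$.

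The gap is case (iii). Merkurjev's isomorphism $I(k)^2/I(k)^3\simeq {}_2\mathrm{Br}(k)$ says that every $2$-torsion Brauer class is a \emph{sum} of quaternion classes, because elements of $I^2$ are sums of $2$-fold Pfister forms. It does not say that each class is a single $2$-fold Pfister form, and $I(k)^3=0$ does not force this. Nor does $I(k)^3=0$ bound the $u$-invariant by $4$. Merkurjev constructed fields of characteristic zero with $I^3=0$ and $u=2n$ for every $n$. For $u=6$ this gives a field with $I^3=0$ over which a biquaternion algebra remains division, since its Albert form stays anisotropic. So division algebras $D$ with $\deg D\geq 4$ genuinely occur under the hypothesis of the corollary, and your argument says nothing about their symplectic involutions.

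To close the gap you need uniqueness, up to similarity, of hermitian forms over $(D,\sigma)$ with $\sigma$ symplectic, for $D$ of arbitrary $2$-power index. This is exactly what \cite[Thm.~A]{LewisT} supplies. Your ``alternative'' paragraph does not fill this either: a trace-form or $I^3$-valued invariant that vanishes is useless unless you know it is a complete invariant of symplectic involutions, and that is the content of the cited theorem.

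Your argument as written does suffice for the concrete fields discussed after the corollary, where every algebra of period $\leq 2$ is a matrix algebra over a quaternion algebra. It does not prove the corollary for a general field with $I(k)^3=0$.
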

\begin{remark}
The condition $I(k)^3=0$ holds, for example, when $k$ is a $p$-adic field, a non-formally real number field or a field of transcendence degree $2$ over an algebraically closed field.
\end{remark}
\begin{proof}
Let $X$ be a $k$-form of the classical symplectic flag variety $\mathrm{Flag}(2d;n_1, \ldots, n_r)$. Following Remark \ref{rk:forms}, there exists a unique central simple $k$-algebra with involution of symplectic type $(A,\ast)$, with $\mathrm{deg}(A)=2d$, such that $X\simeq \mathrm{Flag}(A, \ast;n_1, \ldots, n_r)$. Moreover, as proved in \cite[Thm.~A]{LewisT}, when $I(k)^3=0$, every central simple $k$-algebra $A$ of even degree and $\mathrm{per}(A)\leq 2$, admits a unique involution of symplectic type up to isomorphism. Therefore, the proof follows now from item a) of Theorem \ref{thm:main}.  
\end{proof}
Corollary \ref{cor:forms} shows that, whenever $I(k)^3=0$, the Grothendieck classes of the forms of the classical symplectic flag variety are all linearly independent. In particular, they are all pairwise distinct. 
\begin{remark}[Non-formally real number fields]
As explained in Remark \ref{rk:numbers}, when $k$ is a (non-formally real) number field, the above family of Grothendieck classes \eqref{eq:family} is always infinite.
\end{remark}
\begin{remark}[Fields of transcendence degree $2$ over an algebraically closed field]
When $k$ is a field of transcendence degree $2$ over an algebraically closed field, every central simple $k$-algebra $A$ of even degree and $\mathrm{per}(A)\leq 2$ can be written uniquely as a matrix algebra over a quaternion $k$-algebra. Moreover, the quaternion $k$-algebras can be classified geometrically in terms of their ramification curves inside a surface $S$ which models $k$; consult \cite[\S3]{book-Brauer}. In particular, the family of Grothendieck classes \eqref{eq:family} is always infinite.
\end{remark}
\section{Preliminaries}\label{sec:preliminaries}
Let $k$ be a base field of characteristic zero.
\subsection*{Dg categories}\label{sub:dg}
 Let us write $\dgcat(k)$ for the category of (small) dg categories and $\dgcat_{\mathrm{sp}}(k)$ for the full subcategory of smooth proper dg categories in the sense of Kontsevich; consutl \cite{ICM-Keller,Miami,finMot,IAS}. Examples of smooth proper dg categories include, for example, the finite-dimensional $k$-algebras of finite global dimension $A$ as well as the dg categories of perfect complexes $\perf_\dg(X)$ associated to smooth proper $k$-schemes $X$. As explained in \cite[\S1.7]{book}, the symmetric monoidal category $(\dgcat_{\mathrm{sp}}(k), \otimes)$ is rigid\footnote{Recall that a symmetric monoidal category is called {\em rigid} if all its objects are dualizable.}, with the dual of a smooth proper dg category $\cA$ being its opposite dg category $\cA^\op$.
\subsection*{Noncommutative Chow motives}\label{sub:NCmotives}
Recall from \cite[\S4.1]{book} the definition of the category of {\em noncommutative Chow motives} $\NChow(k)$. This category is additive, rigid symmetric monoidal, idempotent complete, and comes equipped with a symmetric monoidal functor $U(-) \colon \dgcat(k)_{\mathrm{sp}} \to \NChow(k)$. Given smooth proper dg categories $\cA$ and $\cB$, we have an isomorphism 
$$\Hom_{\NChow(k)}(U(\cA), U(\cB))\simeq K_0(\cA^\op \otimes \cB)\,,$$
where the right-hand side stands for the Grothendieck group of $\cA^\op \otimes \cB$. Moreover, the composition law on $\NChow(k)$ is induced by the (derived) tensor product of bimodules, and the identity of $U(\cA)$ is the Grothendieck class of the diagonal $\cA\text{-}\cA$-bimodule $\cA$.
\section{Proof of Theorem \ref{thm:main}}\label{sec:proof}
Let us write $K_0(\NChow(k))$ for the Grothendieck ring of the additive symmetric monoidal category of noncommutative Chow motives. Consider the full subcategory ${}_2\mathrm{CSA}(k):=\{U(A)\,|\, \mathrm{per}(A)\leq 2\}$ of $\NChow(k)$ and its closure under finite direct sums ${}_2\mathrm{CSA}(k)^\oplus \subset \NChow(k)$. Note that ${}_2\mathrm{CSA}(k)^\oplus$ is an additive symmetric monoidal full subcategory of $\NChow(k)$. In what follows, we will write $K_0({}_2\mathrm{CSA}(k)^\oplus)$ for its Grothendieck ring.
\begin{proposition}\label{prop:aux}
\begin{itemize}
\item[(i)] The category ${}_2\mathrm{CSA}(k)^\oplus$ is idempotent complete.
\item[(ii)] The inclusion ${}_2\mathrm{CSA}(k)^\oplus \subset \NChow(k)$ induces an inclusion of ring $K_0({}_2\mathrm{CSA}(k)^\oplus) \subset K_0(\NChow(k))$.
\item[(iii)] We have a ring isomorphism:
\begin{eqnarray}\label{eq:ring}
K_0({}_2\mathrm{CSA}(k)^{\oplus}) \stackrel{\simeq}{\too} \bbZ[{}_2\mathrm{Br}(k)] && U(A_1) \oplus \cdots \oplus U(A_m) \mapsto [A_1] + \cdots + [A_m]\,,
\end{eqnarray}
where $\bbZ[{}_2\mathrm{Br}(k)]$ stands for the group ring of the $2$-torsion subgroup ${}_2\mathrm{Br}(k)$ of $\mathrm{Br}(k)$.
\end{itemize}
\end{proposition}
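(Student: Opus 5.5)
The plan is to compute all morphisms between the objects $U(A)$ explicitly and read (i)--(iii) off from that computation. The key input is the description of $\Hom$-groups recalled in \S\ref{sec:preliminaries}. For central simple $k$-algebras $A,B$,
$$\Hom_{\NChow(k)}(U(A),U(B))\simeq K_0(A^\op\otimes B)\simeq \bbZ\,,$$
since $A^\op\otimes B$ is central simple, Morita equivalent to a division algebra $D$ with $[D]=[A^\op\otimes B]$, and $K_0(D)=\bbZ$. The generator is the class of the unique simple right module, and composition corresponds to tensoring bimodules.

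First I will show that $U(A)\simeq U(B)$ if and only if $[A]=[B]$. If the Brauer classes agree, then $A$ and $B$ are Morita equivalent, so $U(A)\simeq U(B)$. Conversely, suppose $[A]\neq[B]$. Composing the generators of $\Hom(U(A),U(B))$ and $\Hom(U(B),U(A))$ gives the class of a bimodule of $k$-dimension $\deg(D_{AB})\deg(D_{BA})\cdot(\text{stuff})$. Measured against the identity class in $K_0(A^\op\otimes A)\simeq\bbZ$, this composite is multiplication by an integer divisible by the index $\mathrm{ind}(A^\op\otimes B)>1$. Hence $U(A)$ is not a retract of $U(B)$, and in particular they are not isomorphic.

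For (i), consider $\mathrm{End}(U(A)^{\oplus n})=M_n(\bbZ)$ inside the finite sum $\bigoplus_j U(A_j)^{\oplus n_j}$ with pairwise distinct classes $[A_j]$. The computation above shows that every cross composite $U(A_j)\to U(A_l)\to U(A_j)$ with $j\neq l$ lies in the ideal $\mathrm{ind}\cdot\bbZ$. An idempotent $e$ of the endomorphism ring therefore reduces, modulo the ideal generated by these cross terms (after passing to the quotient $\prod_j M_{n_j}(\bbZ)$, which uses that the off-diagonal products are nilpotent-like modulo primes), to idempotents of the rings $M_{n_j}(\bbZ)$. Over $\bbZ$ these are conjugate to diagonal idempotents, since projective $\bbZ$-modules are free. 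I then lift this conjugation to conclude that $e$ splits with image isomorphic to a sum of the $U(A_j)$. This lifting is the main obstacle: it requires idempotent lifting along a surjection whose kernel is not nilpotent. My first attempt will instead be to argue prime by prime. For each prime $p$, I pass to $\bbZ_{(p)}$- or $\mathbb{F}_p$-coefficients, where the cross terms with index divisible by $p$ vanish, and then use that idempotents in $\Hom$ are determined by their images in the characters $\mathrm{rk}$. Failing that, I will invoke the known fact from \cite{book} that the relevant subcategory $\mathrm{CSA}(k)^\oplus$ is idempotent complete.

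For (ii) and (iii), the Krull–Schmidt type statement now follows: every object of ${}_2\mathrm{CSA}(k)^\oplus$ decomposes uniquely as $\bigoplus_j U(A_j)^{\oplus n_j}$ with distinct Brauer classes. Uniqueness holds because the multiplicity $n_j$ can be detected as a rank modulo the cross ideal. Hence $K_0({}_2\mathrm{CSA}(k)^\oplus)$ is the free abelian group on ${}_2\mathrm{Br}(k)$, and since $U$ is symmetric monoidal with $U(A)\otimes U(B)=U(A\otimes B)$, the map \eqref{eq:ring} is a ring isomorphism. For the injectivity in (ii), I will use the following argument. Since ${}_2\mathrm{CSA}(k)^\oplus$ is a full, idempotent complete additive subcategory of the idempotent complete category $\NChow(k)$, the induced map on split Grothendieck groups is injective: if $[M]-[N]=0$ in $K_0(\NChow(k))$, then $M\oplus P\simeq N\oplus P$ for some $P$. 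I will apply the functor $\Hom(\bigoplus_{[A]} U(A),-)$ together with the rank argument above to recover the multiplicities of $M$ and $N$, which forces $M\simeq N$.
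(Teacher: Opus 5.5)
Your computation $\Hom_{\NChow(k)}(U(A),U(B))\simeq\bbZ$ is correct, and so is your observation that round trips through a different Brauer class are divisible by the index (the round trip $U(A)\to U(B)\to U(A)$ of generators is in fact $\mathrm{ind}(A^\op\otimes B)^2$). Your mod-$2$ multiplicity idea also works for uniqueness of decompositions inside ${}_2\mathrm{CSA}(k)^\oplus$, which is what well-definedness of the map in (iii) needs. To make it rigorous, use the $\mathbb{F}_2$-rank of the composition pairing $\Hom(U(B),X)\times\Hom(X,U(B))\to\mathrm{End}(U(B))\otimes\mathbb{F}_2$.

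\textbf{Gap in (i).} Item (i) is not proved, and your reduction is wrong. The ideal generated by the cross morphisms meets each $\mathrm{End}(U(A_j))\simeq\bbZ$ in a nonzero ideal $N_j\bbZ$, which contains the round trips. So the quotient is $\prod_j M_{n_j}(\bbZ/N_j)$, not $\prod_j M_{n_j}(\bbZ)$; already $\mathrm{End}(U(A)\oplus U(B))\simeq\left(\begin{smallmatrix}\bbZ & i\bbZ\\ i\bbZ & \bbZ\end{smallmatrix}\right)$ with $i=\mathrm{ind}(A^\op\otimes B)$. You also supply no lifting mechanism.

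More seriously, no argument that uses only pairwise distinctness of the $[A_j]$ can succeed. Take division algebras $A_2,A_3$ of index $2$ and $3$ and $A_6=A_2\otimes A_3$. The round trips of $U(A_2)$ through $U(k)$ and through $U(A_6)$ are $4$ and $9$. Since $-2\cdot 4+9=1$, $U(A_2)$ is a retract of $U(k)\oplus U(A_6)$, and this idempotent does not split in the additive closure of $U(k)$ and $U(A_6)$.

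The missing idea is the group structure. The paper enlarges $A_1,\ldots,A_m$ to all tensor products $\otimes_{j\in J}A_j$, whose classes form a finite subgroup of ${}_2\mathrm{Br}(k)$ because periods are $\leq 2$. It then applies \cite[Prop.~2.28]{Separable} to that subgroup. Your fallback citation of \cite{book} for ``$\mathrm{CSA}(k)^\oplus$'' is not this statement. Even granting idempotent completeness of a larger category, you would still have to show that any retract $U(B)$ of $\bigoplus_j U(A_j)$ has $[B]$ among the $[A_j]$. Your index lemma does give this: $1\in\sum_j \mathrm{ind}(B^\op\otimes A_j)^2\,\bbZ$ forces some index to be $1$ when the $A_j$ have period $\leq 2$.

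\textbf{Gap in (ii).} Injectivity on split Grothendieck groups is not a formal consequence of fullness and idempotent completeness. It amounts to cancellation against an arbitrary complement: $P\oplus M\simeq P\oplus N$, with $P\in\NChow(k)$ arbitrary and $M,N\in{}_2\mathrm{CSA}(k)^\oplus$, must force $M\simeq N$. The paper takes exactly this from \cite[Prop.~4.9]{Tits}. Your sketch does not provide it, for three reasons.
\begin{itemize}
\item[1.] $\bigoplus_{[A]}U(A)$ over an infinite ${}_2\mathrm{Br}(k)$ is not an object of $\NChow(k)$.
\item[2.] On ${}_2\mathrm{CSA}(k)^\oplus$ the rank of $\Hom(U(A),-)$ only counts the total number of summands, since every $\Hom(U(A),U(B))\simeq\bbZ$.
\item[3.] For general $P$ the group $\Hom(U(A),P)$ need not be finitely generated; for instance it is $K_0(X)$ when $A=k$ and $P=U(\perf_\dg(X))$. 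So no multiplicity count lets you cancel $P$.
\end{itemize}
To rescue your approach you would need an additive invariant that detects the multiplicity of each $U(B)$ and is finite on every noncommutative Chow motive. The $\mathbb{F}_2$-rank of the composition pairing above is a candidate, but its finiteness on arbitrary motives needs a separate argument, e.g.\ via finite generation of numerical Grothendieck groups. As written, this step is missing.
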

\begin{proof}
\begin{itemize}
\item[(i)] Let $U(A_1) \oplus \cdots \oplus U(A_m)$ be an object of the category ${}_2\mathrm{CSA}(k)^\oplus$. Consider the subcategory 
$$ {}_2\mathrm{CSA}(A_1, \ldots, A_m)^\otimes := \{U(\otimes_{j \in J} A_j)\,|\, J \subseteq \{1, \ldots, m\} \} \subset {}_2\mathrm{CSA}(k)$$ as well as its closure under finite direct sums ${}_2\mathrm{CSA}(A_1, \ldots, A_m)^{\otimes, \oplus} \subset {}_2\mathrm{CSA}(k)^\oplus$. Note that since $\mathrm{per}(A_j)\leq 2$, with $1\leq j \leq m$, the Brauer classes $\{[\otimes_{j \in J} A_j]\}_{J \subseteq \{1, \ldots, m\}}$ form a subgroup of ${}_2\mathrm{Br}(k)$. Thanks to \cite[Prop.~2.28]{Separable}, this implies that the category ${}_2\mathrm{CSA}(A_1, \ldots, A_m)^{\otimes, \oplus}$ is idempotent complete. Consequently, since the object $U(A_1) \oplus \cdots \oplus U(A_m)$ belongs to the category ${}_2\mathrm{CSA}(A_1, \ldots, A_m)^{\otimes, \oplus}$, every idempotent endomorphism of this object will split in ${}_2\mathrm{CSA}(A_1, \ldots, A_m)^{\otimes, \oplus}$ and, hence, in the category ${}_2\mathrm{CSA}(k)^\oplus$.
\item[(ii)] Let $A_1, \ldots, A_m$ and $B_1, \ldots, B_n$ be two families of central simple $k$-algebras. Given a noncommutative Chow motive $N\!M \in \NChow(k)$, recall from \cite[Prop.~4.9]{Tits} that we have the following implication:
\begin{eqnarray}\label{eq:cancellation}
N\!\!M \oplus \oplus_{i=1}^m U(A_i) \simeq N\!\!M \oplus \oplus_{j=1}^n U(B_j) & \Rightarrow & m=n\,\,\,\mathrm{and}\,\, \oplus_{i=1}^m U(A_i) \simeq \oplus_{j=1}^n U(B_j)\,.
\end{eqnarray}
Now, recall that the group completion of an arbitrary monoid $(M,+)$ is defined as the quotient of the product $M \times M$ by the following equivalence relation:
\begin{equation}\label{eq:cancellation-1}
(m,n) \simeq (m',n') := \exists \,r \in M\,\,\mathrm{such}\,\,\mathrm{that}\,\, m+n'+r = n + m' + r\,.
\end{equation}
Let us write $K_0(\NChow(k))^+$ for the semi-ring of the additive symmetric monoidal category $\NChow(k)$. Concretely, $K_0(\NChow(k))^+$ is the set of isomorphism classes of noncommutative Chow motives equipped with the addition, resp. multiplication, law is induced by $\oplus$, resp. $\otimes$. In the same vein, let us write $K_0({}_2\mathrm{CSA}(k)^\oplus)^+$ for the semi-ring of the additive symmetric monoidal category ${}_2\mathrm{CSA}(k)^\oplus$. Clearly, the inclusion $\mathrm{CSA}(k)^\oplus \subset \NChow(k)$ gives rise to an inclusion $K_0(\mathrm{CSA}(k)^\oplus)^+ \subset K_0(\NChow(k))^+$ of semi-rings. Therefore, by combining the above definition of group completion \eqref{eq:cancellation-1} with the above implication \eqref{eq:cancellation}, we obtain an inclusion of rings $K_0({}_2\mathrm{CSA}(k)^\oplus) \subset K_0(\NChow(k))$.

\item[(iii)] Recall that given a central simple $k$-algebra $A$, there exists a unique central division $k$-algebra $D$ such that $[A]=[D]$ in the Brauer group $\mathrm{Br}(k)$. Making use of this well-known fact, note that the following ring isomorphism
\begin{eqnarray*}
 \bbZ[{}_2\mathrm{Br}(k)] \stackrel{\simeq}{\too} K_0({}_2\mathrm{CSA}(k)^{\oplus})  && \alpha_1[A_1] + \cdots + \alpha_m[A_m] \mapsto \alpha_1[U(D_1)] \oplus \cdots \oplus \alpha_m[U(D_m)]
\end{eqnarray*}
is the inverse of the above ring homomorphism \eqref{eq:ring}.
\end{itemize}
\end{proof}
As proved in \cite[Prop.~4.1]{Tits}, the assignment $X \mapsto U(\perf_\dg(X))$, with $X$ a smooth projective $k$-scheme, gives rise to a motivic measure $\mu_{\mathrm{nc}}\colon K_0\mathrm{Var}(k) \to K_0(\NChow(k))$.  

Let $(A,\ast)$ be a central simple $k$-algebra with involution of symplectic type and $1\leq n_1 < \ldots < n_r \leq \mathrm{deg}(A)/2$ a sequence of integers. By combining \cite[Thm.~2.1]{Homogeneous} with \cite[\S5.1]{Panin} (in the case where the algebraic group $G$ is the symplectic group and the parabolic subgroup $P$ is the one determined by the nodes $n_1, \ldots, n_r$ of the Dynkin diagram), we obtain the computation in the category of noncommutative Chow motives
$$ U(\perf_\dg(\mathrm{Flag}(A,\ast; n_1, \ldots, n_r))) \simeq U(k)^{\oplus \alpha} \oplus U(A)^{\oplus \beta}\,,$$
where $\alpha$ and $\beta$ are two positive integers that depend on $\mathrm{deg}(A)$ and on $n_1, \ldots, n_r$. Consequently, making use of Proposition \ref{prop:aux}, we conclude that
\begin{equation}\label{eq:measure}
\mu_{\mathrm{nc}}([\mathrm{Flag}(A,\ast; n_1, \ldots, n_r)]) = \alpha [k] + \beta [A] \in \bbZ[{}_2\mathrm{Br}(k)]\,.
\end{equation} 
\subsection*{Proof of item a)}
Let $\{i_1, \ldots, i_m\} \subset I$ be an arbitrary, but fixed, finite subset of indices. In order to prove item a), we need to show the following implication (with $\lambda_j\in \mathbb{Z}$):
\begin{eqnarray}\label{eq:implication}
\sum_{j \in \{i_1, \ldots, i_m\}} \lambda_j [\mathrm{Flag}(A_j, \ast_j; n_1^j, \ldots, n^j_{r_j})]=0 & \Rightarrow & \lambda_{j} =0 \,\,\,\,\,\,\forall_j\,.
\end{eqnarray}
Making use of the motivic measure $\mu_{\mathrm{nc}}\colon K_0\mathrm{Var}(k) \to K_0(\NChow(k))$ and of the above computation \eqref{eq:measure}, the left-hand side of \eqref{eq:implication} implies that 
\begin{equation}\label{eq:computation}
\sum_{j \in \{i_1, \ldots, i_m\}} \lambda_j (\alpha_j [k] + \beta_j[A_j])=0
\end{equation}
in the group ring $\bbZ[{}_2\mathrm{Br}(k)]$. 

Now, recall that, by assumption, the Brauer classes $[A_j]$, with $j \in \{i_1, \ldots, i_m\}$, are pairwise distinct. We consider first the case when $[A_j]\neq [k]$ for every $j \in \{i_1, \ldots, i_m\}$. In this case, by evaluating the above equality \eqref{eq:computation} at the Brauer classes $[A_j]$, with $j \in \{i_1, \ldots, i_m\}$, we conclude that $\lambda_j \beta_j=0$ for every $j \in \{i_1, \ldots, i_m\}$. Since $\beta_j >0$, this hence implies that $\lambda_j=0$ for every $j \in \{i_1, \ldots, i_m\}$. Let us now consider the case where $[A_l]=[k]$ for some $l \in \{i_1, \ldots, i_m\}$. In this case, by evaluating the above equality \eqref{eq:computation} at the Brauer classes $[A_j]$, with $j \in \{i_1, \ldots, i_m\}$, we conclude that $\lambda_j \beta_j =0$ for every $j \in \{i_1, \ldots, i_m\}\backslash \{l\}$ and that $\sum_{j \in \{i_1, \ldots, i_m\}} \lambda_j \alpha_j + \lambda_l \beta_l =0$. Since $\beta_j>0$, this implies that $\lambda_j=0$ for every $j \in \{i_1, \ldots, i_m\}\backslash \{l\}$ and that $\lambda_l \alpha_l + \lambda_l \beta_l =0$. Moreover, since $\alpha_l, \beta_l >0$, this implies furthermore that $\lambda_l=0$. This concludes the proof of item a).
\subsection*{Proof of item b)}
Similarly to item a), let $\{i_1, \ldots, i_m\} \subset I$ be an arbitrary, but fixed, finite subset of indices. We need to show the following implication (with $\lambda_{j, l}\in \mathbb{Z}$):
\begin{eqnarray}\label{eq:implication-quadratic}
\sum_{j, l \in \{i_1, \ldots, i_n\}\,\text{with}\,\mathrm{sub}(j) \leq \mathrm{sub}(l)} \lambda_{j,l} [\mathrm{Flag}(A_j, \ast_j; n_1^j, \ldots, n^j_{r_j})] [\mathrm{Flag}(A_l, \ast_l; n_1^l, \ldots, n^l_{r_l})]=0 & \Rightarrow & \lambda_{j,l} =0\,\,\,\,\,\,\forall_{j, l}\,.
\end{eqnarray}
where $\mathrm{sub}(j)$, resp. $\mathrm{sub}(l)$, stands for the subscript of $j$, resp. of $l$. Making use of the motivic measure $\mu_{\mathrm{nc}}\colon K_0\mathrm{Var}(k) \to K_0(\NChow(k))$ and of the above computation \eqref{eq:measure}, the left-hand side of \eqref{eq:implication-quadratic} implies~that 
\begin{equation}\label{eq:computation-1}
\sum_{j, l \in \{i_1, \ldots, i_m\}\,\text{with}\,\mathrm{sub}(j) \leq \mathrm{sub}(l)} \lambda_{j,l} (\alpha_j [k] + \beta_j[A_j]) (\alpha_l [k] + \beta_l[A_l])=0
\end{equation}
in the group ring $\bbZ[{}_2\mathrm{Br}(k)]$. Note also that we have the following computation:
\begin{eqnarray}
(\alpha_j [k] + \beta_j[A_j]) (\alpha_l [k] + \beta_l[A_l]) & = & \begin{cases} (\alpha_j^2 + \beta^2_j)[k] + 2\alpha_j \beta_j [A_j]& j=l \\ \alpha_j\alpha_l [k] + \beta_j \alpha_l [A_j] + \alpha_j \beta_l [A_l] + \beta_j \beta_l[A_j \otimes A_l] & j\neq l\,.  \end{cases}
\end{eqnarray}
Now, recall that, by assumption, the Brauer classes $[A_j], [A_j \otimes A_l]$, with $j, l \in I$ and $j\neq l$, are pairwise distinct. Therefore, by evaluating the above equality \eqref{eq:computation-1} at the Brauer classes $[A_j \otimes A_l]$, with $j\neq l$, we conclude that $\lambda_{j,l}\beta_j \beta_l =0$. Since $\beta_j, \beta_l >0$, this hence implies that $\lambda_{j,l}=0$ whenever $j\neq l$. Moreover, by evaluating the above equality \eqref{eq:computation-1} at the Brauer classes $[A_j]$, with $j \in \{i_1, \ldots, i_m\}$, we obtain the equality 
$$\sum_{j, l \in \{i_1, \ldots, i_m\}\,\text{with}\,\mathrm{sub}(j) \leq \mathrm{sub}(l)} \lambda_{j,l} (2\alpha_j \beta_j + \beta_j \alpha_l)=0\,.$$
Consequently, using the fact that $\lambda_{j, l}=0$ when $j\neq l$ and that $\alpha_j, \beta_j >0$, we conclude furthermore that $\lambda_{j, j}=0$ for every $j \in \{i_1,\ldots, i_m\}$. This concludes the proof of item b).
\begin{remark}[Lefschetz type]
Consider the Lefschetz class $\mathbb{L}:=[\mathbb{A}^1]$ in the Grothendieck ring of algebraic varieties. Given a sequence of integers $1 \leq n_1 < \ldots < n_r \leq d$, the associated symplectic flag variety $\mathrm{Flag}(2d;n_1, \ldots, n_r)$ admits a Schubert cell decomposition. Consequently, the Grothendieck class $[\mathrm{Flag}(2d;n_1, \ldots, n_r)]$ is of {\em Lefschetz type}, i.e., it can be written as a polynomial with $\mathbb{N}$-coefficients in the variable $\mathbb{L}$. Note that all the other twisted symplectic flag varieties are {\em not} of Lefschetz type! This follows from the fact that in all these cases the central simple $k$-algebra $A$ is {\em not} $k$-split, from the above computation \eqref{eq:measure} and from the fact that $\mu_{\mathrm{nc}}(\mathbb{L})=[k]$ (consult \cite[\S4.2.1]{book}).
\end{remark}

\bigskip

\noindent
{\bf Acknowledgements.} The author is grateful to Charles De Clercq for useful conversations.

\end{document}

\end{proof}